\newtheorem{theorem}{Theorem}
\theoremstyle{plain}
\newtheorem{corollary}{Corollary}
\newtheorem{definition}{Definition}
\newtheorem{problem}{Problem}[]
\newtheorem{proposition}{Proposition}
\newtheorem{remark}{Remark}
\newcommand{\eq}{\hspace*{-2mm}&=&\hspace*{-2mm}}
\newcommand\const{\operatorname{const}}
\begin{document}

\title[The plasticity of convex sets]
{The plasticity of non-overlapping convex sets\\ in
$\mathbb{R}^2$}

\author{Anastasios N. Zachos}

\address{University of Patras, Department of Mathematics,, GR-26500 Rion, Greece}

\email{azachos@gmail.com}

 \keywords{Fermat-Torricelli problem, convex, curve, variation, inverse problem, plasticity of non-overlapping closed  convex sets}
 \subjclass{51E10, 51N20, 51P05, 70E17, 70F15, 70G75, 93B27}

\begin{abstract}
We study a generalization of the weighted Fermat-Torricelli
problem in the plane, which is derived by replacing vertices of
a~convex polygon by 'small' closed convex curves with weights
being positive real numbers on the curves, we also study its
genera\-lized inverse problem. Our solution of the problems is
based on the first variation formula of the length of line
segments that connect the weighted Fermat-Torricelli point with
its projections onto given closed convex curves. We find the
'plasticity' solutions for non-overlapping circles with variable
radius.
\end{abstract}

\maketitle

\section{Introduction}
\label{sec:intro}

The extremum problem
formulated by Fermat and after a few years solved by Torricelli,
is as follows: Given three points $A, B$ and $C$ in the Euclidean
plane $\mathbb{R}^2$, the task is to find a point $P$ such that
the sum of distances $PA + PB + PC$ is minimal. The {weighted
\textbf{Fermat-Torricelli}} (\textbf{F-T}) problem is to find the
(unique) point that minimizes the sum of the weighted distances
(i.e., multiplied by positive numbers -- weights) from three given
points in $\mathbb{R}^2$.
The~study of the weighted F-T problem in the plane
and its inverse
is given in \cite{Gue/Tes:02,Zach/Zou:08}. For historical remarks
and generalizations (e.g. in Banach spaces) of the weighted F-T
problem, the reader can consult
\cite{BolMa/So:99,Kup/Mar:97,Mord:11,Mord:12,Mord:12b}.

In~the paper, we study the {generalized F-T} problem for $n\ge3$
convex sets in $\mathbb{R}^2$ and provide a method of its study.
We are based on a technique of differentiation of the length of
geodesics on a $C^{2}$-surface with respect to arc length, see
e.g. \cite{VToponogov:05}, and applying the parametrization method
of~\cite{Zachos/Cots:10,CotsiolisZachos:11}.

Let $A_{1}A_{2}\ldots A_{n}\ (n\ge3)$ be a convex polygon in
$\mathbb{R}^2$ and $\gamma_i$ a convex curve surrounding $A_{i}$
that meets orthogonally at points $D_{ij}\ (j\ne i)$ two sides
containing~$A_{i}$, and the curves do not intersect inside the
polygon, see Fig.~\ref{fig1} for $n=3$.
Segments on the edges of the polygon and $n$ arcs of $\gamma_i$
bound a curvilinear $2n$-gon~$\Omega$. For $P\in\Omega$, let
$A_i'=\pi_i(P)$ be projections of $P$ onto $\gamma_i$. Note that
the segments $PA_i'$ intersect $\gamma_i$ orthogonally.
For $n=3$, let $\varphi_{Q}$ be the angle between the line
segments ${RP}$ and ${SP}$ for $Q,R,S\in\{A_1',A_2',A_3'\}$ and
$Q\ne R\ne S$.

\begin{problem}[The generalized F-T problem in $\mathbb{R}^{2}$]\label{probr2}\rm
Find a point $P\in\mathbb{R}^2$ such that
\begin{equation}\label{minimum}
 \sum\nolimits_{i=1}^{n} w_{i}\,{d}(P, \gamma_i) \to \min\,,
\end{equation}
where $w_{1},\ldots,w_{n}$ are given positive numbers (weights)
and $d$ -- the distance.
\end{problem}

\begin{figure}
\centering
\includegraphics[scale=0.35]{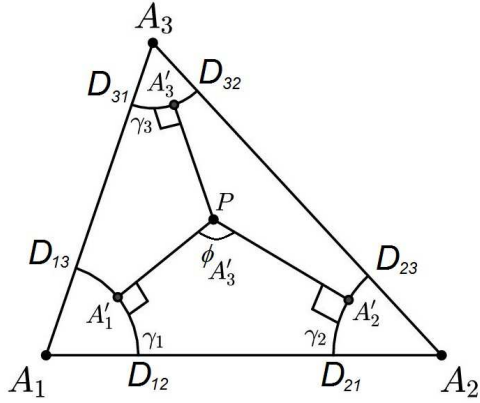}
\caption{The generalized F-T problem for three closed convex
curves in $\mathbb{R}^{2}$.} \label{fig1}
\end{figure}

\begin{problem}[The inverse generalized F-T problem in $\mathbb{R}^{2}$]\label{Problem-2}
Given a generalized F-T point $P$ inside $\Omega$ with the
vertices defined by orthogonal projections of $P$ onto
$\gamma_{i}\ (i=1,\dots,n)$
find positive weights $w_{i}\in\mathbb{R}$ such that
\begin{equation*}
 \sum\nolimits_{i=1}^{n} w_{i} = 1.
\end{equation*}
\end{problem}

\smallskip
The paper is organized as follows. Sect.~\ref{sec:2R} is devoted
to solution of Problem~\ref{probr2}.


In~Sects.~\ref{sec:inverse} and \ref{sec:non-over} we characterize
solutions of Problem~\ref{Problem-2}.

\section{The generalized F-T problem for closed convex sets in $\mathbb{R}^2$}
\label{sec:2R}

In the section we assume that the weights $w_i$ are positive real
numbers which correspond on convex curves $\gamma_i$, and
generalize results of \cite{Zach/Zou:08}, where $\gamma_i=A_i$.


\begin{proposition}\label{L-01}
The function $f=\sum\nolimits_{i} w_{i}\,{d}(P, \pi_i{P})$
is convex in $\Omega$.
\end{proposition}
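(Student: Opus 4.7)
The plan is to reduce convexity of $f$ to the classical fact that the distance to a closed convex set in $\mathbb{R}^{2}$ is a convex function, and then to take a positive linear combination.

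First I would replace each closed convex curve $\gamma_i$ by the closed convex region $K_i\subset\mathbb{R}^2$ that it bounds, so that $A_i\in K_i$ and $\partial K_i=\gamma_i$. Since $P\in\Omega$ lies outside every $K_i$, the orthogonal projection $\pi_i(P)$ coincides with the unique metric projection of $P$ onto the convex body $K_i$; in particular $d(P,\pi_i(P))=d(P,K_i)$. This removes the dependence on the projection map and rewrites $f$ as $\sum_i w_i\,d(\cdot,K_i)$, with each summand defined on all of $\mathbb{R}^2$.

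Next I would verify (it is a one-line estimate) that $P\mapsto d(P,K_i)$ is convex on $\mathbb{R}^2$: if $Q,Q'\in K_i$ realise the distances from $P,P'$ respectively and $t\in[0,1]$, then $tQ+(1-t)Q'\in K_i$ by convexity of $K_i$, so by the triangle inequality and homogeneity of the norm
\[
d\bigl(tP+(1-t)P',K_i\bigr)\le \bigl\lVert t(P-Q)+(1-t)(P'-Q')\bigr\rVert \le t\,d(P,K_i)+(1-t)\,d(P',K_i).
\]
A positive linear combination of convex functions is convex, so $f$ is convex on $\mathbb{R}^2$ and, \emph{a fortiori}, on $\Omega$ (in the sense that the restriction is convex along every segment lying in $\Omega$).

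I do not expect a genuine obstacle here. The only step that requires a moment's care is the identification of $\pi_i(P)$ with the metric projection onto $K_i$, which is immediate from $P\in\Omega$ lying outside $K_i$ together with the orthogonality of $\gamma_i$ to the segment $P\pi_i(P)$ that is already built into the setup. After that, the argument is purely formal and does not use the specific geometry of $\Omega$ at all.
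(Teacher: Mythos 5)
Your proof is correct for the statement as written, but it takes a genuinely different and more elementary route than the paper. You identify $d(P,\pi_i(P))$ with $d(P,K_i)$, the distance to the closed convex region $K_i$ bounded by $\gamma_i$, and invoke the classical fact that the distance to a convex set is a convex function, finishing with a positive linear combination; the identification is legitimate since $P\in\Omega$ lies outside $K_i$ and the nearest-point projection onto $K_i$ is exactly the foot of the orthogonal segment in the paper's setup. The paper instead argues infinitesimally: it differentiates $l_i(s)=d(P_s,\pi_i(P_s))$ twice along a curve $P_s$ adapted to the projection and cites Ivanov--Tuzhilin for $l_i'(0)=0$ and $l_i''(0)>0$, thereby claiming each summand is \emph{strictly} convex. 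The trade-off is worth noting: your argument is shorter, needs no smoothness of $\gamma_i$ and no external second-variation lemma, but it delivers only convexity, whereas the paper's (somewhat delicate, and in the purely radial direction actually questionable, since $d(\cdot,K_i)$ is affine along rays orthogonal to $\gamma_i$) strict-convexity claim is what Theorem~\ref{Th1} leans on for uniqueness of the minimizer. If you want your route to support that downstream uniqueness, you should add a remark that $f$ cannot be affine on any segment of $\Omega$ because the $n\ge 3$ convex sets are non-overlapping with non-collinear configuration, so the sum of the distance functions is strictly convex on $\Omega$ even though each summand is not.
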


\textbf{Proof}. It is sufficient to show that the function
$f_i=w_{i}\,{d}(P, \pi_i(P))$ is strictly convex in $\Omega$. We
shall compute the second differential of $f_i$ at a point $P$ in
any direction $X$. Let $P_s$ be a smooth curve with the properties
$P_0=P$ and $\frac{d}{ds}\,P_s=\alpha X$ (for some $\alpha\ne0$)
such that $s$ is the natural parameter of $\pi_i(P_s)$ (the part
of $\gamma_i$). Denote by $f_i(s)=w_i(s)\,l_i(s)$, where
$l_i(s)={d}(P_s, \pi_i({P_s}))\ge0$. It is known that $l_i'(0)=0$
and $l_i''(0)>0$ (see in
\cite{Ivan/Tuzh:01},\cite[Proposition~6.1,
Corollary~6.1]{Ivan/Tuzh:94}). By conditions, we have
\begin{equation*}
f_i'' = w_i\,l_i''>0\quad
 \mbox{ at }\quad s=0.
\end{equation*}
Hence $d^2 f_i(X,X)>0$. The sum $f=\sum_i f_i$ of strictly convex
functions is also strictly convex. \qed

\begin{theorem}\label{Th1}
 The solution $(P_F)$  of Problem~\ref{probr2} exists and is unique.
\end{theorem}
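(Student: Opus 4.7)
My plan is to establish existence and uniqueness separately, leveraging the strict convexity of $f$ proved in Proposition~\ref{L-01}.

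\textbf{Existence.} I would first observe that each distance function $P\mapsto d(P,\gamma_i)$ is $1$-Lipschitz on $\mathbb{R}^{2}$, so $f$ is continuous on $\mathbb{R}^{2}$. Moreover, $f$ is coercive: for $|P|$ large we have $d(P,\gamma_i)\ge |P|-\max_{Q\in\gamma_i}|Q|$, so $f(P)\to\infty$ as $|P|\to\infty$. Hence $f$ attains its infimum on $\mathbb{R}^{2}$. As a convenient alternative, since $\bar\Omega$ is compact and $f$ is continuous, $f|_{\bar\Omega}$ attains a minimum, and a subsequent localization argument identifies it with the global one.

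\textbf{Uniqueness.} By Proposition~\ref{L-01}, $f$ is strictly convex on $\Omega$. Given two distinct global minimizers $P_{1},P_{2}$ in $\Omega$ joined by a segment lying inside $\Omega$, the midpoint $M$ would satisfy $f(M)<\tfrac12(f(P_{1})+f(P_{2}))=\min f$, a contradiction. So the minimizer is unique.

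\textbf{Main obstacle.} The curvilinear $2n$-gon $\Omega$ need not be convex in $\mathbb{R}^{2}$, because the arcs $\gamma_i$ bulge inward when viewed from $\Omega$; thus the segment between two candidate minimizers may exit $\Omega$, and the strict-convexity argument does not apply directly. To close this gap I would extend the objective to $\tilde f(P)=\sum_i w_i\,d(P,K_i)$, where $K_i$ denotes the closed convex region bounded by $\gamma_i$. Since the distance to any convex body is convex on $\mathbb{R}^{2}$, the function $\tilde f$ is convex on $\mathbb{R}^{2}$ and coincides with $f$ on $\Omega$ (all $K_i$ being pairwise disjoint and exterior to $\Omega$). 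A Lipschitz-projection argument onto $\mathrm{conv}(\bigcup_i\gamma_i)$ then shows every global minimizer of $\tilde f$ must lie in (the polygon and in fact in) $\bar\Omega$: projecting any $P$ onto this convex hull decreases $d(P,K_i)$ for each $i$, while movement out of $\mathrm{int}\,K_i$ toward $\gamma_i$ does not increase $\tilde f$. Convexity of the minimizer set of $\tilde f$, together with strict convexity of $\tilde f=f$ on $\Omega$ (Proposition~\ref{L-01}), forces this set to be a single point $P_F$, which is the unique solution of Problem~\ref{probr2}.
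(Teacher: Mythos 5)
Your argument reaches the same conclusion by the same core mechanism as the paper---uniqueness via the strict convexity established in Proposition~\ref{L-01}---but it is substantially more complete, and it repairs a real gap. The paper's proof is a single sentence: the objective is strictly convex, hence it has one minimum point on $\Omega$, with citations to standard convex-analysis texts; existence is left implicit and, more importantly, it is silently assumed that strict convexity \emph{on $\Omega$} suffices for uniqueness. You correctly observe that this is not automatic: $\Omega$ is not a convex domain (the arcs of the $\gamma_i$ bow into it), so two candidate minimizers need not be joined by a segment lying in $\Omega$, and the midpoint argument does not apply verbatim. Your two additions---coercivity plus continuity for existence, and the convexified objective $\tilde f(P)=\sum_i w_i\,d(P,K_i)$ with $K_i$ the convex body bounded by $\gamma_i$, which is globally convex on $\mathbb{R}^2$ and agrees with $f$ off the interiors of the $K_i$---close exactly the gaps the published proof glosses over. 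The one step you should still tighten is the localization: the projection argument places the minimizer set of $\tilde f$ in $\mathrm{conv}\bigl(\bigcup_i\gamma_i\bigr)$ and outside the interiors of the $K_i$, but that region is slightly larger than $\bar\Omega$, and if the (necessarily convex) minimizer set were a segment contained entirely in the boundary of $\Omega$ you would need strict convexity up to the boundary, or a direct computation along an edge, to rule that out. These are minor repairs; your route is sound and strictly more rigorous than the paper's.
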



\textbf{Proof}. By Proposition~\ref{L-01}, the objective function
in (\ref{minimum}) is strictly convex; hence,
it has one minimum point on~$\Omega$, see \cite{Bor/Van:10},
\cite[p.~263]{Rockafellar:97}. \qed

\smallskip

The next theorem and corollary can be easily extended for any
$n>3$.


\begin{theorem}\label{T-sol2}
If the generalized F-T point $P$ is an interior point of $\Omega$
$($Fig.~\ref{fig1}$)$ then each angle $\varphi_{i}$ can be
expressed as a function of $w_{i},\ (i=1,2,3)$, as
\begin{equation}\label{E-wi}
\begin{array}{ccc}
 \cos\varphi_{1}= \frac{w_{1}^{2}-w_{2}^{2}-w_{3}^{2}}{2\,w_{2} w_{3}},\quad
 \cos\varphi_{2} = \frac{w_{2}^{2}-w_{1}^{2}-w_{3}^{2}}{2\,w_{1} w_{3}},\quad
 \cos\varphi_{3} = \frac{w_{3}^{2}-w_{1}^{2}-w_{2}^{2}}{2\,w_{1}
 w_{2}}.
\end{array}
\end{equation}

\end{theorem}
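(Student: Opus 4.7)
The plan is to use the standard Fermat-Torricelli vector balance identity, adapted to the curve setting via the first variation of arc length alluded to in Section~\ref{sec:intro}. Specifically, at the minimizer $P$, the gradient of each summand in $f=\sum_i w_i\,d(P,\gamma_i)$ has an explicit geometric form, and setting $\nabla f=0$ produces a vanishing weighted sum of unit vectors; the three cosine formulas then fall out of taking squared norms.

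First I would compute $\nabla_P d(P,\gamma_i)$. Let $A_i'=\pi_i(P)$ and set $\mathbf u_i=(P-A_i')/\lVert P-A_i'\rVert$. For a smooth variation $P_s$ of $P$ with $P_0=P$, the first variation formula applied to the segment $P_s A_i'(s)$ (with $A_i'(s)=\pi_i(P_s)$) gives $\frac{d}{ds}\,l_i(s)\bigr|_{s=0}=\langle \dot P_0,\mathbf u_i\rangle$, because $P_s A_i'$ meets $\gamma_i$ orthogonally at $A_i'$, so the variation at the $\gamma_i$-endpoint contributes zero. Hence $\nabla_P\bigl(w_i\,d(P,\gamma_i)\bigr)=w_i\mathbf u_i$.

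Next, since by Theorem~\ref{Th1} the minimizer $P$ is unique, and by hypothesis $P$ is an interior point of $\Omega$, I use the first-order optimality condition $\nabla f(P)=0$, which reads
\begin{equation}\label{eq:balance}
w_1\mathbf u_1+w_2\mathbf u_2+w_3\mathbf u_3=\mathbf 0.
\end{equation}
By the definition of $\varphi_Q$, the angle $\varphi_i$ (with $i\ne j\ne k$) is the angle at $P$ between the segments $PA_j'$ and $PA_k'$, which equals the angle between the unit vectors $\mathbf u_j$ and $\mathbf u_k$ (they differ from the segment directions only by a common sign). Hence $\langle \mathbf u_j,\mathbf u_k\rangle=\cos\varphi_i$ for $\{i,j,k\}=\{1,2,3\}$.

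Finally, rewriting \eqref{eq:balance} as $w_1\mathbf u_1=-(w_2\mathbf u_2+w_3\mathbf u_3)$ and squaring yields $w_1^2=w_2^2+w_3^2+2w_2w_3\cos\varphi_1$, and hence the first formula; the other two follow by cyclic permutation. The main obstacle is justifying the gradient computation rigorously: one needs $\gamma_i$ smooth enough (at least $C^1$) for $\pi_i$ to be well-defined and differentiable at $P$, and one needs to handle the envelope-like contribution from $A_i'(s)$, which is exactly what the orthogonality of $PA_i'$ with $\gamma_i$ kills. This is the same mechanism as in Proposition~\ref{L-01}, where $l_i'(0)=0$ was invoked for the transverse variation; here it is invoked for the longitudinal variation of $P$ to give the correct gradient.
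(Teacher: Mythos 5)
Your proof is correct and rests on essentially the same mechanism as the paper's: the first variation formula for the length of the segment $PA_i'$, the orthogonality of that segment to $\gamma_i$ (which annihilates the moving-endpoint contribution and makes $\nabla_P\,d(P,\gamma_i)$ the unit vector $\mathbf u_i$), and first-order stationarity at the interior point $P$. The only real difference is the final algebra: the paper projects the equilibrium condition onto the three directions $PA_i'$ to obtain the linear system (\ref{equation1})--(\ref{equation3}) and solves it, whereas you isolate $w_1\mathbf u_1=-(w_2\mathbf u_2+w_3\mathbf u_3)$ and take squared norms, which extracts (\ref{E-wi}) slightly more directly.
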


\textbf{Proof}.



 Let
$\gamma_{i}:\vec{r}_{i}(s)\equiv(x_{i}(s),y_{i}(s),z_{i}(s))$ be
three convex curves in $\mathbb{R}^{2}$,
$\gamma_{Pi}:\vec{r}_{Pi}(s)\equiv(x_{Pi}(s),y_{Pi}(s),z_{Pi}(s))$
be three smooth curves in $\mathbb{R}^{2}$ issued from the point
$P$,
$l_i(s)\equiv\sqrt{(x_{i}(s)-x_{Pi}(s))^2+(y_{i}(s)-y_{Pi}(s))^2+(z_{i}(s)-z_{Pi}(s))^2}$
the length of the line segment $\gamma_{i}(s)\gamma_{Pi}(s)$,
$d(P,\gamma_{i})\equiv l_{Q}$ for
$Q\in\{A_1^{\prime},A_2^{\prime},A_3^{\prime}\}$ the distance.
Let us join $Q$ and $P$ by the line segment defined by the line
$c_{Q}(s,t)=c_{QP}(t)$, where $t\in[0,1]$ is a canonical parameter
such that $c_{QP}(0)=Q$ and $c_{QP}(1)=P$. The~function
\[
 l_{Q}(P)=l_{Q}(c_{QP}(t))=l_{Q}(c_{Q}(s,t))=l_{Q}(s)
\]
is differentiable with respect to~$s$. Denote by
$\varphi^i_{c}(s)$ the angle between
$-\frac{dc}{ds}\equiv{\overrightarrow{\gamma_{i}(s)\gamma_{Pi}(s)}}$
and $\frac{d}{ds}\,\vec{r}_{Pi}$ at $P$ and by $\alpha_{i}(s)$ the
angle between $\frac{dc}{ds}$ and $\frac{d}{ds}\,\vec{r}_{i}$
at~$Q$. We~will prove that $\alpha_{i}(s)=\frac{\pi}{2}$ for
$i=1,2,3$.

If any of these angles is not $\frac{\pi}{2}$, say, for $A_{1}'$,
then we move $A_{1}'\to A_{1}''$ along $\gamma_{1}$ and the length
$l_{P}(A_{1}'')<l_{P}({A_{1}'})$ by the first variation formula,
see
\cite[Lemma~3.5.1]{VToponogov:05},
\begin{equation}\label{compute partialgen}
 \frac{d}{ds}\,l_{Q}(s)=\cos\varphi^i_{c}(s)+\cos\alpha_{i}(s).
\end{equation}
Therefore, $\cos\alpha_{i}(s)=0$ and
\begin{equation*}
 \frac{d}{ds}\,l_{Q}(s)=\cos\varphi^i_{c}(s).
\end{equation*}

If $Q=A_{1}^{\prime}$ then we have the line segment
$c_{A_{1}'P}(s)$ that connects the points $A_{1}^{\prime}$ and
$P$, and  $c_{A_{2}'}(s,t)$ is a line that passes through
$A_{2}^{\prime}$ and $c_{A_{3}'}(s,t)$ is a line that passes
through $A_{3}^{\prime}$. Therefore,
\begin{equation*}
 \frac{d}{ds}\,l_{A_{1}^{\prime}}(s)=\cos\varphi_{A_{3}^{\prime}}(s),\quad
 \frac{d}{ds}\,l_{A_{1}^{\prime}}(s)=\cos\varphi_{A_{2}^{\prime}}(s),
\end{equation*}
where $c(s)$ counts from the point $A_{2}^{\prime}$ to $P$, and
from $A_{3}^{\prime}$ to $P$, respectively.

Similarly, for $Q=A_{2}^{\prime}$ and $Q=A_{3}^{\prime}$, we
obtain
\begin{eqnarray}\label{cycleB1}
 \frac{d}{ds}\,l_{A_{2}^{\prime}}(s)=\cos\varphi_{A_{3}^{\prime}}(s),\quad
 \frac{d}{ds}\,l_{A_{2}^{\prime}}(s)=\cos\varphi_{A_{1}^{\prime}}(s),\\
\label{cycleC1}
 \frac{d}{ds}\,l_{A_{3}^{\prime}}(s)=\cos\varphi_{A_{2}^{\prime}}(s),\quad
 \frac{d}{ds}\,l_{A_{3}^{\prime}}(s)=\cos\varphi_{A_{1}^{\prime}}(s).
\end{eqnarray}
 Since $t$ is a canonical parameter, we choose the parametrization
\[
 l_{A_{1}}(s)=\int_{0}^{s}\Big\|\frac{d}{dt}\,\gamma_{A_{1}^{\prime}P}(t)\Big\|\,{\rm d}t =\int_{0}^{s}\Big\|\frac{d}{dt}\,\gamma_{A_{1}^{\prime}}(s,t)\Big\|\,{\rm d}t=s,
\]
that is
\begin{equation}\label{compute partialparameter}
 l_{A_{1}^{\prime}}(s)=l_{A_{1}^{\prime}}=s.
\end{equation}
We assume that the distances $l_{A_{2}^{\prime}}$,
$l_{A_{3}^{\prime}}$ can be expressed as functions of
$l_{A_{1}^{\prime}}$,
\begin{equation*}
 l_{A_{2}^{\prime}}=l_{A_{2}^{\prime}}(l_{A_{1}^{\prime}}),\qquad
 l_{A_{3}^{\prime}}=l_{A_{3}^{\prime}}(l_{A_{1}^{\prime}}).
\end{equation*}
From this
and (\ref{minimum}) the following equation is obtained:
\begin{equation*}
 w_{1}l_{A_{1}^{\prime}}+w_{2}l_{A_{2}^{\prime}}(l_{A_{1}^{\prime}})+w_{3}l_{A_{3}^{\prime}}(l_{A_{1}^{\prime}})
 \to\min.
\end{equation*}
Differentiating this
with respect to the variable $l_{A_{1}^{\prime}}$ and using
(\ref{compute partialparameter}), we get
\begin{equation}\label{eq:B221frac}
 w_{1}+w_{2}\frac{d\,l_{A_{2}^{\prime}}}{d\,l_{A_{1}^{\prime}}}
 +w_{3}\frac{d\,l_{A_{3}^{\prime}}}{d\,l_{A_{1}^{\prime}}}=0.
\end{equation}
From (\ref{cycleB1})
and (\ref{cycleC1})
we get
\begin{equation}\label{partialimp2}
 \frac{d\,l_{A_{2}^{\prime}}}{d\,l_{A_{1}^{\prime}}}=\cos\varphi_{A_{3}^{\prime}}(s),\qquad
 \frac{d\,l_{A_{3}^{\prime}}}{d\,l_{A_{1}^{\prime}}}=\cos\varphi_{A_{2}^{\prime}}(s).
\end{equation}
Replacing (\ref{partialimp2}) and (\ref{compute partialparameter})
in (\ref{eq:B221frac}), we obtain
\begin{equation}\label{equation1}
 w_{1}+w_{2}\cos\varphi_{A_{3}^{\prime}}(l_{A_{1}^{\prime}})+w_{3}\cos\varphi_{A_{2}^{\prime}}(l_{A_{1}^{\prime}})=0.
\end{equation}
Similarly, working cyclically, we choose the parametrization
\[
 l_{A_{2}'}(s')=\int_{0}^{s'}\Big\|\frac{d}{dt}\,\gamma_{A_{2}'}(s',t)\Big\|\,{\rm d}t=s',\quad
 l_{A_{3}'}(s'')=\int_{0}^{s''}\Big\|\frac{d}{dt}\,\gamma_{A_{3}'}(s'',t)\Big\|\,{\rm d}t=s''.
\]
Differentiating (\ref{minimum}) with respect to $l_{A_{2}}$ for
$s^{\prime}=l_{A_{2}^{\prime}}$ and $l_{A_{3}^{\prime}}$ for
$s^{\prime\prime}=l_{A_{3}^{\prime}}$, we get
\begin{eqnarray}
\nonumber
 w_{1}\cos\varphi_{A_{3}'}(l_{A_{2}'})+w_{2}+w_{3}\cos\varphi_{A_{1}'}(l_{A_{2}'})=0,\\
\label{equation3}
 w_{1}\cos\varphi_{A_{2}'}(l_{A_{3}'})+w_{2}\cos\varphi_{A_{1}'}(l_{A_{3}'})+w_{3}=0.
\end{eqnarray}
From the uniqueness of the generalized F-T point $P$, we obtain
\[
 \varphi_{A'_{Q}}(l_{Q})=\varphi_{A'_{Q}}(l_{R})=\varphi_{A'_{Q}}(l_{S})=\varphi_{A'_{Q}},
\]
and that the solution of the linear system
(\ref{equation1})\,--\,(\ref{equation3}) is (\ref{E-wi}).
\qed

\begin{remark}\label{cocircles}\rm
a) Setting $w_{1}=w_{2}=w_{3}$ in (\ref{E-wi}), we get
$\varphi_{A'_Q}=120^{o}$. Hence, if the generalized equally
weighted F-T point $P$ is an interior point of $\Omega$ for $n=3$
then $\varphi_{A'_Q}=120^{o}$ (Isogonal property of the
generalized F-T point for equal weights).

b) If each closed convex curve $\gamma_{i}\ (i=1,2,3)$ approaches
to the circle $C(A_i,r_i)$ with the same perimeter then the
limiting case of Problem~\ref{probr2} for $n=3$ is the F-T problem
in $\mathbb{R}^{2}$ and $P\to P_{F}$, where $P_{F}$ is the F-T
point of $\triangle A_1 A_2 A_3$.
\end{remark}

\section{The generalized inverse weighted F-T problem for closed convex sets in $\mathbb{R}^2$}
\label{sec:inverse}

In the section we generalize results of
\cite{Zachos:2013a,Zachos:2013b}, where $\gamma_i=A_i$.
We shall
give the definition of dynamic plasticity for $n$ non-overlapping
convex sets in~$\mathbb{R}^2$.

\begin{definition}\label{dynamicplasticity}\rm
We call \textit{dynamic plasticity of $\,n$ non-overlapping convex
sets} $C_{i}$ in $\mathbb{R}^{2}$ the set of solutions
$\{(w_{1})_{1\ldots n},\ldots,(w_{n})_{1\ldots n}\}$ of
Problem~\ref{Problem-2} for $n$ sets $C_{i}$ with corresponding
variable weights $(w_{i})_{1\ldots n}$.
\end{definition}




\begin{proposition}[see \cite{Zach/Zou:08,Zachos/Cots:10}]\label{propo5}
Given the generalized F-T point $P$ to be an interior point of
$\Omega$ with the vertices defined by the three orthogonal
projections $A_i'$ of $P$ onto $\gamma_{i}\ (i=1,2,3)$ lie on
three line segments $PA_i'$ and form the given angles
$\varphi_{i}$, the positive weights $w_{i}$ are the solution of
Problem~\ref{Problem-2}:
\begin{equation*}
 w_{Q}=
 \Big(1+\frac{\sin{\varphi_{R}}}{\sin{\varphi_{Q}}}+\frac{\sin{\varphi_{S}}}{\sin{\varphi_{Q}}}\Big)^{-1}
 \quad{\rm for} \ \  Q,R,S\in\{1,2,3\}\ \ {\rm and} \ \ Q\ne R\ne S.
\end{equation*}
\end{proposition}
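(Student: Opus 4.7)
The plan is to invert the relations (\ref{E-wi}) provided by Theorem~\ref{T-sol2}. Those three equations express $\cos\varphi_i$ as a function of the triple $(w_1,w_2,w_3)$ in the form of a classical law of cosines; my strategy is to turn this observation into a law of sines that expresses $w_i$ directly in terms of the $\varphi_j$'s and then to impose the normalization $w_1+w_2+w_3=1$ from Problem~\ref{Problem-2}.

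First I would rewrite (\ref{E-wi}) as
\begin{equation*}
 w_{i}^{2}=w_{j}^{2}+w_{k}^{2}-2\,w_{j}w_{k}\cos(\pi-\varphi_{i}),
 \qquad \{i,j,k\}=\{1,2,3\},
\end{equation*}
and recognize it as the law of cosines for an auxiliary planar triangle $T$ whose sides have lengths $w_{1},w_{2},w_{3}$ and whose interior angle opposite the side $w_{i}$ equals $\pi-\varphi_{i}$. (Since $P$ is assumed to be interior to $\Omega$, Theorem~\ref{Th1} guarantees positive weights, which in turn force $|\cos\varphi_{i}|<1$, so the triangle $T$ is non-degenerate and the angles $\pi-\varphi_{i}$ sum to $\pi$; the latter identity $\varphi_{1}+\varphi_{2}+\varphi_{3}=2\pi$ also matches the geometric picture at $P$.)

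Next I would apply the law of sines to $T$:
\begin{equation*}
 \frac{w_{1}}{\sin(\pi-\varphi_{1})}=\frac{w_{2}}{\sin(\pi-\varphi_{2})}=\frac{w_{3}}{\sin(\pi-\varphi_{3})},
\end{equation*}
and use $\sin(\pi-\varphi_{i})=\sin\varphi_{i}$ to deduce the proportionality
\begin{equation*}
 \frac{w_{Q}}{\sin\varphi_{Q}}=\frac{w_{R}}{\sin\varphi_{R}}=\frac{w_{S}}{\sin\varphi_{S}}.
\end{equation*}

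Finally, combining this proportionality with the constraint $w_{1}+w_{2}+w_{3}=1$ from Problem~\ref{Problem-2} yields $w_{Q}=\sin\varphi_{Q}/(\sin\varphi_{1}+\sin\varphi_{2}+\sin\varphi_{3})$, which upon dividing numerator and denominator by $\sin\varphi_{Q}$ is exactly the stated formula. I do not expect a serious obstacle; the only delicate point is checking that the triangle-of-weights is genuinely constructible from the given angles, i.e., that $\varphi_{1}+\varphi_{2}+\varphi_{3}=2\pi$ with each $\varphi_{i}\in(0,\pi)$, which is ensured by the hypothesis that $P$ is an interior point of $\Omega$ together with Theorem~\ref{T-sol2}.
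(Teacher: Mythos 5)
Your derivation is correct. Note first that the paper does not actually prove Proposition~\ref{propo5} --- it imports it by citation from \cite{Zach/Zou:08,Zachos/Cots:10} --- so there is no in-paper proof to match; the closest internal analogue is the manipulation in the proof of Theorem~\ref{T-01}, where the author passes from the linear weighted ``cosine'' system (the $n=3$ case being (\ref{equation1})--(\ref{equation3})) to weighted ``sine'' equations by linear elimination, which is essentially Lami's theorem for three concurrent forces and yields $w_i\propto\sin\varphi_i$ directly. You instead invert the quadratic relations (\ref{E-wi}) by recognizing them as the law of cosines for an auxiliary triangle with sides $w_1,w_2,w_3$ and angles $\pi-\varphi_i$, and then apply the law of sines. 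Both routes land on the same proportionality $w_Q/\sin\varphi_Q=\const$, after which the normalization $\sum w_i=1$ gives the stated formula; your route buys a clean geometric picture (the dual ``triangle of weights'') at the cost of having to check constructibility, i.e.\ $\varphi_1+\varphi_2+\varphi_3=2\pi$ with each $\varphi_i\in(0,\pi)$, which you correctly tie to $P$ being interior. Two small inaccuracies worth fixing: Theorem~\ref{Th1} asserts existence and uniqueness of the minimizer, not positivity of the weights (positivity is part of the data of Problem~\ref{Problem-2}, and the non-degeneracy $|\cos\varphi_i|<1$ really comes from the strict triangle inequality among the $w_i$, equivalently from the floating/interior-point hypothesis); and strictly speaking your argument establishes uniqueness of the normalized weights from the necessary conditions of Theorem~\ref{T-sol2} --- sufficiency (that these weights do make $P$ the minimizer) follows from the strict convexity in Proposition~\ref{L-01}, which you could mention in one line.
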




Let $(w_i)_{1234}$ be the weight corresponding to the point $A_i'$
of the closed convex curve $\gamma_{i}$, which is a vertex of the
convex quadrilateral $A_{1}'A_{2}'A_{3}'A_{4}'$, and let
$(w_j)_{jkl}$ be the weight corresponding to the point $A_j$ of
the closed convex curve $\gamma_{j}$, which is a vertex of
$\triangle A_jA_kA_l\ (j,k,l=1,\ldots,4)$.
 Furthermore, assume that $P$ lies at the interior of $\triangle A_1'A_2'A_3'\cap\triangle A_1'A_2'A_4'$
 and at the exterior of $\triangle A_1'A_3'A_4'.$

The following theorem deals with equations of dynamic plasticity
with respect to four closed convex sets in $\mathbb{R}^{2}$ and
their corresponding weights.

\begin{theorem}[The dynamic plasticity of four non-overlapping closed convex sets]\label{T-01}
Consider the Problem~\ref{Problem-2} for $n=4$. The dynamic
plasticity of four non-overlapping closed convex sets
$($e.g.circles$)$ $C_{i}\ (i=1,\ldots,4)$ and their corresponding
weights $(w_{i})_{1\ldots 4}$ is given by the following three
equations:
\begin{eqnarray}\label{plastic1g}
 \Big(\frac{w_2}{w_1}\Big)_{1\ldots4}=\Big(\frac{w_2}{w_1}\Big)_{123}
 \Big[1-\Big(\frac{w_4}{w_1}\Big)_{1\ldots4}\Big(\frac{w_1}{w_4}\Big)_{134}\,\Big],\\
\label{plastic2q}
 \Big(\frac{w_3}{w_1}\Big)_{1\ldots4}=\Big(\frac{w_3}{w_1}\Big)_{123}
 \Big[1-\Big(\frac{w_4}{w_1}\Big)_{1\ldots4}\Big(\frac{w_1}{w_4}\Big)_{124}\,\Big],\\
 \label{plastic3q}
 \sum\nolimits_{i=1}^{4}\big({w_i}\big)_{1\ldots4}=\const.
\end{eqnarray}
\end{theorem}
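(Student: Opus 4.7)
The plan is to extract the three claimed equations from a single algebraic manipulation of the $n=4$ first-order optimality condition at $P$, using that for a fixed point with fixed projections, the three weights of any 3-set subproblem are unique up to a common scalar. Running the variational argument of Theorem~\ref{T-sol2} with $n=4$ produces the vector balance
\begin{equation*}
 \sum_{i=1}^4 (w_i)_{1\ldots4}\,\hat{u}_i = 0,
\end{equation*}
where $\hat{u}_i$ denotes the unit vector from $P$ to $A_i'$; analogously, for each subtriple $\{j,k,l\}\subset\{1,2,3,4\}$ one has $(w_j)_{jkl}\hat{u}_j + (w_k)_{jkl}\hat{u}_k + (w_l)_{jkl}\hat{u}_l = 0$. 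Since any three vectors in $\mathbb{R}^2$ admit a one-dimensional linear dependence, the triple $(w_j)_{jkl}, (w_k)_{jkl}, (w_l)_{jkl}$ is determined up to a common scalar -- this is exactly the content of Proposition~\ref{propo5} together with the normalization $\sum=1$.

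To derive (\ref{plastic1g}) I would solve the $\{1,3,4\}$ balance for $\hat{u}_4$,
\begin{equation*}
 \hat{u}_4 = -\frac{(w_1)_{134}\,\hat{u}_1 + (w_3)_{134}\,\hat{u}_3}{(w_4)_{134}},
\end{equation*}
and substitute into the 4-set balance, obtaining
\begin{equation*}
 \bigl[(w_1)_{1\ldots4} - (w_4)_{1\ldots4}(w_1/w_4)_{134}\bigr]\hat{u}_1 + (w_2)_{1\ldots4}\,\hat{u}_2 + \bigl[(w_3)_{1\ldots4} - (w_4)_{1\ldots4}(w_3/w_4)_{134}\bigr]\hat{u}_3 = 0.
\end{equation*}
By the uniqueness above applied to the triple $\{1,2,3\}$, these three bracketed expressions must be proportional to $(w_1)_{123}, (w_2)_{123}, (w_3)_{123}$. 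Equating the ratio of the $\hat{u}_2$-coefficient to the $\hat{u}_1$-coefficient with $(w_2/w_1)_{123}$ and rearranging yields (\ref{plastic1g}) directly. An entirely symmetric manipulation, using the $\{1,2,4\}$ balance to eliminate $\hat{u}_4$ and then comparing the $\hat{u}_3$- and $\hat{u}_1$-coefficients with $(w_3/w_1)_{123}$, yields (\ref{plastic2q}). Equation (\ref{plastic3q}) is simply the normalization $\sum_{i=1}^4 (w_i)_{1\ldots4} = 1$ built into Problem~\ref{Problem-2}.

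The main subtlety will be the subproblem $\{1,3,4\}$, for which $P$ lies \emph{outside} $\triangle A_1'A_3'A_4'$ by hypothesis: Proposition~\ref{propo5}'s sine formula may then produce weights of mixed sign, so $(w_i)_{134}$ must be interpreted as a formal solution of the gradient relation rather than a genuine positive-weight minimizer. This is harmless for the derivation, which invokes only the linear-dependence identity among $\hat{u}_1,\hat{u}_3,\hat{u}_4$ and the non-vanishing of $(w_4)_{134}$; both are automatic in the configuration hypothesised in the theorem. A minor additional verification would be that the 4-set first-order condition indeed decouples into two independent scalar equations (as is forced by the 2D tangent space at $P$), so that together with the normalization one obtains precisely a one-parameter family, matching the three-equation description of dynamic plasticity.
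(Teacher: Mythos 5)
Your proposal is correct and follows essentially the same route as the paper: your vector balance $\sum_{i}(w_i)_{1\ldots4}\,\hat u_i=0$ and its subtriple analogues are exactly the paper's weighted ``cosine''/``sine'' equations in disguise, and your elimination of $\hat u_4$ followed by comparison of coefficients with the $\{1,2,3\}$ dependence reproduces the paper's substitution of the sine ratios from Proposition~\ref{propo5} into equations (\ref{equation4biss}) and (\ref{equation3biss})$_1$. The sign subtlety you flag for the subproblem $\{1,3,4\}$ (where $P$ lies outside $\triangle A_1'A_3'A_4'$) is precisely what the paper handles by applying Proposition~\ref{propo5} to $\triangle A_1'A_3'A_4'^*$ with $A_4'^*$ the reflection of $A_4'$ through $P$, which gives $-\big(\tfrac{w_1}{w_4}\big)_{134}=\sin\angle A_3'PA_4'/\sin\angle A_1'PA_3'$ and hence the same sign convention as your ``formal solution'' interpretation.
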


\textbf{Proof}. By applying the variational method that was used
in the proof of Theorem~\ref{T-sol2}, we obtain the weighted
'cosine' equations:
\begin{eqnarray}\label{equation1bis}
 w_{1}+w_{2}\cos\angle\,A_{1}'PA_{2}'+w_{3}\cos\angle\,A_{1}'PA_{3}' +w_{4}\cos\angle\,A_{1}'PA_{4}'=0,\\
\label{equation2bis}
 w_{1}\cos\angle\,A_{2}'PA_{1}' +w_{2}+w_{3}\cos\angle\,A_{2}'PA_{3}' +w_{4}\cos\angle\,A_{2}'PA_{4}'=0,\\
\label{equation3bis} w_{1}\cos\angle\,A_{3}'PA_{1}'
+w_{2}\cos\angle\,A_{3}'PA_{2}'
+w_{3}+w_{4}\cos\angle\,A_{3}'PA_{4}'=0.
\end{eqnarray}
Solving (\ref{equation1bis}) and (\ref{equation2bis}) with respect
to $w_{1}$ and $w_{2}$, we derive the weighted 'sine' equations:
\begin{eqnarray}\label{equation3biss}
\nonumber
 -w_{1}\sin\angle\,A_{2}'PA_{1}' +w_{3}\sin\angle\,A_{2}'PA_{3}' +w_{4}\sin\angle\,A_{2}'PA_{4}'=0,\\
 -w_{2}\sin\angle\,A_{1}'PA_{2}' +w_{3}\sin\angle\,A_{1}'PA_{3}' +w_{4}\sin\angle\,A_{1}'PA_{4}'=0.
\end{eqnarray}
Solving (\ref{equation1bis}) and (\ref{equation3bis}) with respect
to $w_{1}$ and $w_{3}$, we derive the
weighted 'sine' equation:
\begin{eqnarray}\label{equation4biss}
 -w_{1}\sin\angle\,A_{3}'PA_{1}' +w_{2}\sin\angle\,A_{3}'PA_{2}' -w_{4}\sin\angle\,A_{3}'PA_{4}'=0.
\end{eqnarray}
Applying Proposition~\ref{propo5} with respect to
$\triangle A_1'A_2'A_3'$, $\triangle A_1'A_2'A_4'$ and $\triangle
A_1'A_3'A_4'^*$, where $A_4'^*$ is the symmetric point of $A_4'$
with respect to $P$, we get
\begin{equation*}
 \Big(\frac{w_2}{w_1}\Big)_{123}\!=\frac{\sin\angle\,A_{1}'PA_{3}'}{\sin{\angle\,A_{2}'PA_{3}'}},\ \
 \Big(\frac{w_1}{w_4}\Big)_{124}\!=\frac{\sin\angle\,A_{2}'PA_{4}'}{\sin{\angle\,A_{1}'PA_{2}'}},\ \
 -\Big(\frac{w_1}{w_4}\Big)_{134}\!=\frac{\sin{\angle\,A_{3}'PA_{4}'}}{\sin\angle\,A_{1}'PA_{3}'}.
\end{equation*}
By replacing these solutions of Proposition~\ref{propo5} in
(\ref{equation4biss}) and (\ref{equation3biss})$_1$, we get
(\ref{plastic1g}) and (\ref{plastic2q}), respectively. \qed

\begin{remark}\rm
A similar system of weighted 'sine' equations has been obtained in
\cite[Theorem~1]{Zachos:2013b}, where four closed convex sets
degenerate to four fixed points.
\end{remark}

\begin{theorem}\label{gplasticityprincipleR2}
Given four rays which meet at the generalized F-T point $P$ and
their orthogonal projections with respect to four closed convex
curves $\gamma_{i}\ (i=1,\ldots,4)$ form a convex quadrilateral in
$\mathbb{R}^{2}$, an increase of the weight that corresponds to a
ray causes a decrease to the two weights that correspond to the
two neighboring rays and an increase to the weight that
corresponds to the opposite ray.
\end{theorem}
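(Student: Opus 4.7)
My approach is to linearize the dynamic plasticity constraints about the given equilibrium and determine the four signs via Cramer's rule. The key observation is that the weighted cosine equations (\ref{equation1bis})--(\ref{equation3bis}) used in the proof of Theorem~\ref{T-01} are simply the projections, onto each direction $\vec u_j$, of a single vectorial equilibrium at~$P$,
\[\sum_{i=1}^{4} w_i\,\vec u_i=\vec 0,\]
where $\vec u_i$ denotes the unit vector from $P$ toward the projection $A_i'$. Together with the normalization $\sum_{i=1}^{4} w_i=1$ from Problem~\ref{Problem-2}, these three scalar constraints cut out a one-parameter family of admissible weight tuples at fixed geometry.

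I would parametrize this family by $w_1$ and compute the derivatives $x_i=dw_i/dw_1$ normalized by $x_1=1$. Differentiating yields
\[\sum_{i=1}^{4} x_i\,\vec u_i=\vec 0,\qquad \sum_{i=1}^{4} x_i=0,\]
a homogeneous $3\times 4$ system whose kernel is one-dimensional (any three of the augmented vectors $(\cos\theta_i,\sin\theta_i,1)^{\!\top}$ are linearly independent because the corresponding points are vertices of a nondegenerate triangle). By Cramer's rule the kernel is spanned by $c_i=(-1)^{i+1}\det A_{\hat\imath}$, where $A_{\hat\imath}$ is the $3\times 3$ minor obtained by deleting column $i$ of the coefficient matrix.

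Ordering $\vec u_i=(\cos\theta_i,\sin\theta_i)$ counterclockwise around $P$ so that $\theta_1<\theta_2<\theta_3<\theta_4<\theta_1+2\pi$, the minor $\det A_{\hat\imath}$ is twice the signed area of the triangle on the unit circle with vertices $\vec u_j$, $j\ne i$, and a standard expansion produces
\[\det A_{\hat\imath}=4\sin\frac{\theta_k-\theta_j}{2}\sin\frac{\theta_\ell-\theta_k}{2}\sin\frac{\theta_\ell-\theta_j}{2},\qquad \{j<k<\ell\}=\{1,\ldots,4\}\setminus\{i\}.\]
Since $P$ is interior to the convex quadrilateral $A_1'A_2'A_3'A_4'$, every pairwise angular difference lies in $(0,2\pi)$, each half-difference lies in $(0,\pi)$, and every sine factor is strictly positive; hence $\det A_{\hat\imath}>0$ for all $i$. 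Normalizing $x_1=1$ by dividing through by $c_1=\det A_{\hat 1}>0$ preserves signs, so the alternating pattern of Cramer's rule yields $x_2<0$, $x_3>0$, $x_4<0$. In the cyclic ordering, rays $2$ and $4$ are the two neighbors of ray $1$ and ray $3$ is its opposite, so an increment of $w_1$ decreases the two neighboring weights and increases the opposite one; the same computation with any other index taking the role of $1$ completes the proof. The main obstacle I anticipate is the positivity of the four $3\times 3$ minors: this is the single step where the convex interior-point geometry of the four rays around $P$ enters essentially, while the rest is linear-algebraic bookkeeping of the equilibrium already established in the paper.
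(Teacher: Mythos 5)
Your proof is correct, but it takes a genuinely different route from the paper's. The paper deduces the sign pattern from the dynamic plasticity equations of Theorem~\ref{T-01}: under the normalization $\sum_{1234}w=\sum_{123}w=\cdots$ it writes $(w_i)_{1234}=a_i(w_4)_{1234}+(w_i)_{123}$ with the coefficients $a_i$ expressed through the triangle weight ratios of Proposition~\ref{propo5}, and then asserts that the claim follows -- the signs of the $a_i$ are not verified explicitly there. You instead bypass the triangle decomposition entirely: you recognize the weighted cosine equations (\ref{equation1bis})--(\ref{equation3bis}) as projections of the single balancing condition $\sum_i w_i\vec u_i=\vec 0$, adjoin the normalization $\sum_i w_i=1$, and compute the one-dimensional kernel of the resulting $3\times4$ system by Cramer's rule; the factorization $\det A_{\hat\imath}=4\sin\frac{a}{2}\sin\frac{b}{2}\sin\frac{a+b}{2}$ of the cofactors (which I checked) makes all four minors strictly positive because $P$ lies interior to the convex quadrilateral, so the alternating cofactor signs give exactly the neighbor-decrease/opposite-increase pattern. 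What your approach buys is an explicit and self-contained sign determination at the one point where the paper's proof is terse, together with manifest symmetry in the index playing the role of the perturbed ray; what the paper's approach buys is consistency with its plasticity framework, expressing the variation through the weights of the sub-triangles $\triangle A_j'A_k'A_l'$, which is then reused for the $n$-gon case in Corollary~\ref{corolllngon}. One small point to make explicit if you write this up: the identification of ``neighboring'' and ``opposite'' rays with the angular ordering $\theta_1<\theta_2<\theta_3<\theta_4<\theta_1+2\pi$ uses that the cyclic labeling of the rays agrees with the cyclic order of the vertices of the convex quadrilateral, which holds precisely because $P$ is an interior point.
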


\textbf{Proof}. Taking into account that the four rays which meet
at the generalized F-T point $P$ intersects each $\gamma_i$ at a
right angle, we derive that the first variation formula of the
length of a ray with respect to arc length (see
\cite[Lemma~3.5.1]{VToponogov:05}) coincides with the first
variation formula of the length of the ray with respect to arc
length from four fixed points, respectively, and thus obtain the
same angular relation. By~following the process that was used in
\cite{Zachos:2013b}, we derive the same plasticity equations of
Theorem~\ref{T-01}. Assuming
\begin{eqnarray*}
 \sum\nolimits_{1234}w=\sum\nolimits_{123}w=\sum\nolimits_{124}w=\sum\nolimits_{134}w=\sum\nolimits_{234}w\,,
\end{eqnarray*}
by Theorem~\ref{T-01} we obtain
 $\,(w_i)_{1234}=a_i (w_4)_{1234}+ (w_i)_{123}$ for $i=1,2,3$,
where
\begin{eqnarray*}
 a_1 \eq \frac{\big(\frac{w_1}{w_4}\big)_{134}\big(\frac{w_2}{w_1}\big)_{123}
 +\big(\frac{w_1}{w_4}\big)_{124}\big(\frac{w_3}{w_1}\big)_{123}-1}
 {1+\big(\frac{w_2}{w_1}\big)_{123}+\big(\frac{w_3}{w_1}\big)_{123}},\\
 a_2\eq a_{1}\Big(\frac{w_2}{w_1}\Big)_{123}-\Big(\frac{w_1}{w_4}\Big)_{134}\Big(\frac{w_2}{w_1}\Big)_{123},\\
 a_3 \eq a_{1}\Big(\frac{w_3}{w_1}\Big)_{123}-\Big(\frac{w_1}{w_4}\Big)_{124}\Big(\frac{w_3}{w_1}\Big)_{123}.
\end{eqnarray*}
From this the claim follows. \qed

\smallskip

We will discuss a connection of the generalized F-T problem for
$n$ circles $C(A_{i},r_{i})$ in $\mathbb{R}^{2}$ and a degenerate
Steiner problem for $n$ circles $C(A_{i},r_{i})$ and one mobile
vertex $P$ (or circle $C(P,r)$) at the convex hull of
$\{A_{1},\cdots, A_{n}\}$. The~Steiner problem states:

\begin{problem}
\rm
Find all
networks of minimal length spanning points $\{A_{1},\cdots,
A_{n}\}$ in~$\mathbb{R}^{2}$.
\end{problem}

A kind of this problem (called the degenerate Steiner problem)
states:


\begin{definition}\rm
A \textit{generalized F-T tree} for $n$ given circles $C_{i}$ in
$\mathbb{R}^2$ is the solution of the following problem: ``Let
$\{C_{1},C_{2},\cdots C_{n}\}$ be $n$ given non-overlapping
circles and a positive real number (weight) corresponds to each
circle $C_{i}$ in $\mathbb{R}^{2}$ and one mobile circle $C(P,r)$
located at the convex hull of $\{A_{1},A_{2},\cdots A_{n}\}$.
Describe all the minimal weighted networks (networks of minimal
length) spanning $\{C_{1},C_{2},\cdots C_{n}\}$".
\end{definition}

The unique generalized F-T tree with respect to circles
$\{C_{i}\}$ consists of $n$ (weigh\-ted) line segments
$A_{i}^{\prime}P$ (branches) which intersect at the generalized
F-T point $P,$ where $A_{i}^{\prime}$ is the intersection point of
$C_{i}$ with the segment $A_{i}P$.
The geometric plasticity of weighted F-T tree networks for
quadrilaterals on surfaces was defined in \cite{Zachos:2013b}.
We shall extend this definition regarding the generalized F-T tree
problem for $n$ circles $C_{i}$.
The generalized F-T tree solution may also be viewed as a
branching solution and the generalized F-T point $P$ -- as the
branching point.

\begin{definition}\label{geometric plasticity}\rm
We call \textit{geometric plasticity of a weighted generalized F-T
tree} a network through $n$ non-overlapping circles $C_{i}$ for
given weights $(w_{i})_{1\cdots n}$, which correspond to each
circle the set of branching solutions of $n$ variable branches
$A_{i}^{\prime}P$ that preserve the generalized F-T point $P$ at
the same location of the $n$-gons formed by the endpoints of
$A_{i}^{\prime}P.$
\end{definition}

The field of branching solutions to various one-dimensional
variational problems has been introduced in \cite{Ivan/Tuzh:01}.
 Note that the geometric plasticity of a weighted generali\-zed
F-T tree for $n$ non-overlapping circles $C_{i}(A_{i},r_{i})$
permits a parallel translation of the circles in direction of rays
defined by $A_{i}P$.

We generalize the geometric plasticity principle derived in
\cite[Theorem~3, Quad\-rilaterals]{Zachos:2013b} for $n$ given
circles $C_{i}$ in $\mathbb{R}^{2}$.

\begin{theorem}\label{geometric plasticity principle of n circles}
Let $C_{i}(A_{i},r_{i})$ be $n$ non-overlapping circles such that
$A_1\cdots A_{n}$ is an $n$-gon in $\mathbb{R}^{2}$ and each
vertex $A_i$ possesses a non-negative weight $w_i$ for
$i=1,\cdots,n$. Assume that the floating case of the generalized
F-T problem is valid:
\[
 \big\|{\sum\nolimits_{j\neq i} w_{j}\,\vec u(A_j,A_i)}\big\| > w_i,\quad i\in\{1,\cdots,n\},
\]
and select $r_{i}\ (i=1,\cdots,n)$ such that $P$ does not belong
to the disk bounded by $C_{i}(A_{i},r_{i})$. Assume that $P$ is
connected with every vertex $A_i$ for $i=1,\cdots,n$, a circle
$C_{i}^{\prime}$ with center $A_i'$ has a non-negative weight
$w_i$ at the line that is defined by the line segment $PA_i$, an
$n$-gon $A_1'\dots A_n'$ has the property
\[
 \big\|{\sum\nolimits_{j\neq i} w_{j}\vec u(A_j',A_i')}\big\|>w_i,\quad i=1,\cdots,n,
\]
and $P$ does not belong to the disk $C_{i}(A_{i}',r_{i}')$ for
$i=1,\cdots,n$. Then the generalized  F-T point $P^{\prime}$
equals to $P$ $($the geometric plasticity principle$)$.
\end{theorem}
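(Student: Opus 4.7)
The plan is to reduce the claim to a first-order vector equilibrium at $P$, observe that this equilibrium is preserved under the permitted translations of the circles along the rays emanating from $P$, and then appeal to strict convexity together with uniqueness of the F-T point.

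First I would record the precise first-order characterization. For a circle $C_i(A_i,r_i)$ with $P$ exterior to its disk, the orthogonal projection $A_i'=\pi_i(P)$ lies on the segment $PA_i$ and $d(P,C_i)=\|PA_i\|-r_i$; hence $\nabla_P\,d(P,C_i)=(P-A_i)/\|P-A_i\|$, independent of $r_i$. The first-variation argument of Theorem~\ref{T-sol2} (extended to arbitrary $n\ge 3$ as the paper indicates) then shows that stationarity of $f(Q)=\sum_{i=1}^n w_i\,d(Q,C_i)$ at an interior point $P$ of $\Omega$ is equivalent to the weighted vector balance
\[
 \sum_{i=1}^n w_i\,\frac{P-A_i}{\|P-A_i\|}=0,
\]
the floating-case inequality $\|\sum_{j\neq i}w_j\vec u(A_j,A_i)\|>w_i$ being exactly what places the solution strictly inside $\Omega$.

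Next, I would use the geometric assumption that each $A_i'$ lies on the line through $P$ and $A_i$, together with the content of Definition~\ref{geometric plasticity} (parallel translation of the circles along the rays from $P$ through $A_i$), to put each $A_i'$ on the \emph{same} open ray from $P$ as $A_i$. Consequently
\[
 \frac{P-A_i'}{\|P-A_i'\|}=\frac{P-A_i}{\|P-A_i\|},\qquad i=1,\ldots,n,
\]
and since the weights $w_i$ are unchanged, the balance displayed above is automatically satisfied at $P$ for the primed data. Combined with the primed floating-case hypothesis and the assumption $P\notin C_i(A_i',r_i')$ for all $i$, this places $P$ in the interior of the primed region $\Omega'$ and identifies $P$ as a critical point of $f'(Q)=\sum_i w_i\,d(Q,C_i')$.

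Finally, Proposition~\ref{L-01} applied to the primed configuration shows that $f'$ is strictly convex, and Theorem~\ref{Th1} provides a unique minimizer $P'$; the critical point $P$ must therefore coincide with $P'$. The main obstacle I anticipate is the orientation/sign issue: the hypothesis ``$A_i'$ on the line $PA_i$'' must be read as ``on the ray from $P$ through $A_i$'', because otherwise the unit vector from $P$ to $A_i'$ would be reversed and the balance above would fail. Eliminating this case uses the geometric-plasticity viewpoint (translation of the circle along the ray, without the translated circle crossing $P$) together with the standing assumption that $P$ remains outside every primed disk; once this orientation is secured, the remainder of the argument is a direct substitution into the already established vector equilibrium.
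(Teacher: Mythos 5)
Your proposal is correct and follows essentially the same route as the paper: both arguments rest on the weighted equilibrium condition $\sum_i w_i\,\vec u(P,A_i)=0$, the observation that $\vec u(P,A_i')=\vec u(P,A_i)$ because $A_i'$ lies on the ray from $P$ through $A_i$, and uniqueness of the minimizer to conclude $P\equiv P'$. You are somewhat more explicit than the paper in justifying why the equilibrium characterizes the F-T point (via the gradient $\nabla_P\,d(P,C_i)$ being independent of $r_i$, Proposition~\ref{L-01} and Theorem~\ref{Th1}) and in flagging the ray-versus-line orientation issue, but the substance is the same.
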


\begin{proof}
$P$ is the generalized F-T floating point of circles $C_{1}\ldots
C_{n}$ whose centers form a convex polygon $A_{1}\ldots A_{n}$ in
$\mathbb{R}^{2}$. Thus, the weighted floating equilibrium
condition holds:
\[
 \sum\nolimits_{\,i=1}^{n}w_{i}\vec u(P,A_i)={0}.
\]
If $P'$ is the generalized F-T floating point of the circles
$C'_{1}\ldots C'_{n}$ then
\[
 \sum\nolimits_{\,i=1}^{n} w_{i}\vec u(P',A'_i)={0};
\]
hence, $P\equiv P'$ because $A'_{i}$ is located at the ray defined
by $PA_{i}$ starting from~$P$.
\end{proof}

Therefore, the plasticity of a generalized F-T tree of $n$ circles
deals with the simultaneous occurence of both the dynamic
plasticity and the geometric plasticity of the corresponding
variable weighted tree network.

\begin{proposition}\label{theorP5}
Consider Problem~\ref{Problem-2}
with respect to $n$ circles $C(A_{i},r_{i})$.
The~following equations point out the \underline{plasticity} of
the system:
\begin{equation}\label{plastic1P5}
\begin{array}{c}
 (\frac{w_2}{w_1})_{1\dots n}=(\frac{w_2}{w_1})_{123}\,
 [1
 -(\frac{w_4}{w_1})_{1\dots n}(\frac{w_1}{w_4})_{134}
 -\ldots
 -(\frac{w_n}{w_1})_{1\dots n}(\frac{w_1}{w_n})_{13n}],\\
 \\
 (\frac{w_3}{w_1})_{1\dots n}=(\frac{w_3}{w_1})_{123}\,
 [1
 -(\frac{w_4}{w_1})_{1\dots n}(\frac{w_1}{w_4})_{124}
 -\ldots
 -(\frac{w_n}{w_1})_{1\dots n}(\frac{w_1}{w_n})_{12n}].\
\end{array}
\end{equation}
The weight $(w_i)_{1\dots n}$ corresponds to the vertex $A_i$ of
$A_{1}\ldots A_{n}$, and the weight $(w_j)_{jkl}$ corresponds to
the
vertex $A_j$ of
$\triangle A_jA_kA_l\ (j,k,l=1,\dots,n)$.
\end{proposition}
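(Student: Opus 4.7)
The plan is to mimic the proof of Theorem~\ref{T-01} with $n$ rays in place of four. First, I would apply the variational argument from the proof of Theorem~\ref{T-sol2}: since each segment $PA_i'$ meets $\gamma_i$ orthogonally, parametrize the $i$-th projection curve by its arc length $s_i$ and differentiate the objective $\sum_j w_j\,l_{A_j'}$ with respect to each $s_i$ in turn, using the first variation formula to obtain the $n$ weighted cosine equations at the generalized F-T point $P$:
\begin{equation*}
 w_i + \sum\nolimits_{j\ne i} w_j\cos\angle A_i'PA_j' = 0, \qquad i=1,\ldots,n.
\end{equation*}
Jointly these express the planar equilibrium $\sum_i w_i\,\vec u(P,A_i')=\vec 0$.

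Second, I would linearly combine the cosine equations for $i=1$ and $i=3$ so as to eliminate $w_1$ and $w_3$, arriving at the sine equation orthogonal to the ray $PA_3'$:
\begin{equation*}
 -w_1\sin\angle A_3'PA_1' + w_2\sin\angle A_3'PA_2' -\sum\nolimits_{k=4}^{n} w_k\sin\angle A_3'PA_k' = 0.
\end{equation*}
The $\pm$ signs of the individual terms encode on which side of the line through $PA_3'$ each projection $A_k'$ lies, and follow from the assumed convex cyclic arrangement of $A_1',\ldots,A_n'$ about $P$ (exactly as in equation~(\ref{equation4biss}) of the $n=4$ case).

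Third, applying Proposition~\ref{propo5} to $\triangle A_1'A_2'A_3'$ (where $P$ is interior) and to each $\triangle A_1'A_3'A_k'$, $k=4,\ldots,n$ (where $P$ is exterior, handled through the symmetric point $A_k'^{\,*}$ as in the proof of Theorem~\ref{T-01}), gives
\begin{equation*}
 \Big(\frac{w_2}{w_1}\Big)_{\!123}=\frac{\sin\angle A_1'PA_3'}{\sin\angle A_2'PA_3'},\qquad
 -\Big(\frac{w_1}{w_k}\Big)_{\!13k}=\frac{\sin\angle A_3'PA_k'}{\sin\angle A_1'PA_3'}.
\end{equation*}
Dividing the sine equation above by $w_1\sin\angle A_2'PA_3'$ and substituting these relations yields the first identity of~\eqref{plastic1P5}. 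The second identity follows by the same recipe with the roles of indices $2$ and $3$ interchanged: derive the sine equation perpendicular to $PA_2'$ from the cosine equations for $i=1$ and $i=2$, and apply Proposition~\ref{propo5} to $\triangle A_1'A_2'A_3'$ and to each $\triangle A_1'A_2'A_k'$ for $k=4,\ldots,n$.

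The main obstacle I anticipate is purely bookkeeping: confirming that the $\pm$ signs arising in the sine equation (one per $k\ge 4$) and the $\pm$ sign in Proposition~\ref{propo5} (depending on whether $P$ is interior or exterior to the auxiliary triangle) combine consistently to produce, for every $k\ge 4$, the uniform $-$ sign displayed in~\eqref{plastic1P5}. This rests on the geometric hypothesis — implicit in the statement and inherited from the $n=4$ case of Theorem~\ref{T-01} — that $P$ is exterior to each $\triangle A_1'A_3'A_k'$ (respectively $\triangle A_1'A_2'A_k'$), while $A_k'$ lies on the appropriate side of the separating ray. Beyond this sign check, no new idea is needed; the argument is a transparent telescoping extension of the proof of Theorem~\ref{T-01}.
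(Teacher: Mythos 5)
Your proposal is correct, and its second half (projecting the equilibrium onto the directions perpendicular to $PA_3'$ and $PA_2'$, then substituting the ratios from Proposition~\ref{propo5} for $\triangle A_1'A_2'A_3'$ and for $\triangle A_1'A_3'A_k'$, $\triangle A_1'A_2'A_k'$ via the symmetric points $A_k'^{\,*}$) coincides exactly with the paper's endgame; the division by $w_1\sin\angle A_2'PA_3'$ reproduces \eqref{plastic1P5} as you claim. Where you diverge is in how the weighted sine equations are obtained. The paper does not start from the $n$ cosine equations and eliminate: following \cite[Proposition~4.4]{Zachos/Zou:88}, it uses the cosine law in $\triangle PA_iA_3$ to write each $d(P,A_i)$ as a function of $d(P,A_3)$ and the angle $\angle PA_3A_2$, and differentiates the objective $\sum_i w_i\,d(P,A_i)$ directly with respect to that angular variable, which produces the sine equation in one step (and likewise with respect to $\angle PA_2A_3$ for the second one); the configuration hypothesis that the branches $PA_i$, $i\ge 4$, lie nested inside $\angle A_1PA_4$ is what fixes the signs there. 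Your route --- first variation with respect to arc length giving the cosine equations $w_i+\sum_{j\ne i}w_j\cos\angle A_i'PA_j'=0$, then pairwise elimination as in \eqref{equation4biss} --- is precisely the alternative derivation the author records in the remark immediately following the proposition, so it is a legitimate and in fact anticipated variant; it buys uniformity with the proof of Theorem~\ref{T-01} and makes the equilibrium interpretation transparent, at the cost of the sign bookkeeping you flag, which the paper's angular parametrization handles implicitly through its stated configuration assumption.
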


\textbf{Proof}. We follow the process used in
\cite[Proposition~4.4]{Zachos/Zou:88} for a convex $n$-gon, and
assume that $n-3$ branches grow simultaneously from the point $P$
and belong to $\angle\,A_{1}PA_{4}$ such that the vector
$\overline{P A_{i}}$ belongs to $\angle\,A_{1}PA_{i-1}$ for
$i=5,\dots,n$. Applying the cosine law to $\triangle\,PA_{i}A_{3}$
for $i\in\{1,\dots,n\}$ and $i\neq 3$, allows us to consider the
distance ${d}(P,A_{i})$ as a function of two variables,
${d}(P,A_{3})$ and $\angle\,PA_{3}A_{2}$\,:
\begin{equation*}
 {d}(P,A_{i})^{2} \!= {d}(P,A_{3})^{2}\!+{d}(A_{3},A_{i})^{2}
 \!-2{d}(P,A_{3}){d}(A_{3},A_{i})\cos(\angle A_{2}A_{3}A_{i}-\angle PA_{3}A_{2}).
\end{equation*}
Differentiating the objective function
$f(P)=\sum\nolimits_{i}w_{i}{d}(P,A_{i})$ with respect to the
variable $\angle\,PA_{3}A_{2}$ (see also the differentiation in
Corollary~\ref{cocircles}),
we obtain
\begin{equation*}
 w_{1}\sin\angle\,A_{1}P A_{3}-w_{2}\sin\angle A_{2}PA_{3}+w_{4}\sin\angle A_{3}P A_{4}+\ldots
 +w_{n}\sin\angle A_{3}P A_{n}=0.
\end{equation*}
From this
we have
\begin{equation*}
 \Big(\frac{w_2}{w_1}\Big)_{1\ldots n}\!=\frac{\sin\angle A_{3}PA_{1}}{\sin\angle A_{2}PA_{3}} \Big[1+\Big(\frac{w_4}{w_1}\Big)_{1\ldots n}\frac{\sin\angle A_{3}PA_{4}}{\sin\angle A_{3}PA_{1}} +\ldots+\Big(\frac{w_n}{w_1}\Big)_{1\ldots n}\frac{\sin\angle A_{3}PA_{n}}{\sin\angle A_{3}PA_{1}}\Big].
\end{equation*}
Taking into account Proposition~\ref{propo5} with respect to
$\triangle A_1A_2A_3$ and $\triangle A_1A_3A_i^*$, where $A_i^*$
is the symmetric point of $A_{i}$ with respect to $P$ for
$i=4,\dots,n$, we get
\begin{equation*}
 \Big(\frac{w_2}{w_1}\Big)_{123}=\frac{\sin\angle\,A_{1}PA_{3}}{\sin{\angle\,A_{2}PA_{3}}},\quad
 -\Big(\frac{w_1}{w_n}\Big)_{13n}=\frac{\sin{\angle\,A_{3}PA_{n}}}{\sin\angle\,A_{1}PA_{3}}.
\end{equation*}
Similarly, differentiating the objective function
$f(P)=\sum\nolimits_{i}w_{i}{d}(P,A_{i})$ with respect to the
variable $\angle\,PA_{2}A_{3}$, we obtain
\begin{equation*}
 -w_{1}\sin\angle\,A_{1}P A_{2}+w_{3}\sin\angle\,A_{2}PA_{3}
 +\ldots
 +w_{n}\sin\angle\,A_{2}P A_{n}=0.
\end{equation*}
Taking into account Proposition~\ref{propo5} with respect to
$\triangle A_1A_2A_i$ for $i=3,\dots,n$, we~get
\[
 \Big(\frac{w_3}{w_1}\Big)_{123}=\frac{\sin\angle\,A_{1}P A_{2}}{\sin{\angle\,A_{2}P A_{3}}},\quad
 \Big(\frac{w_1}{w_i}\Big)_{12i}=\frac{\sin\angle\,A_{2}P A_{i}}{\sin\angle\,A_{1}P A_{2}}.\qquad\qed
\]

\begin{remark}\rm
1. Choosing a proper orientation of angles that was used in
\cite{Zachos:2013b}, we also derive the plasticity equations
(\ref{plastic1P5}) applying the first variation formula of line
segments with respect to arc length and the parametrization that
was used in \cite{Zachos/Cots:10}.

2. The dynamic plasticity of the degenerate case of quadrilaterals
and convex pentagons as a limiting case of closed hexahedra is
studied in \cite[Theorem~1 and Corollary~3]{Zachos:2013b}
and \cite[Theorem~6]{Zachos:2013a}, respectively.
\end{remark}

Denote by
\[
 \sum\nolimits_{1\ldots n}w:=\sum\nolimits_{i=1}^n \Big(\frac{w_i}{w_1}\Big)_{1\ldots n}.
\]

\begin{corollary}\label{corolllngon}
Let
\[
 \sum\nolimits_{1\dots n}^{}w=\sum\nolimits_{123}^{}w=\sum\nolimits_{124}^{}w=\sum\nolimits_{134}^{}w
 =\ldots=\sum\nolimits_{1(n-1)n}^{}w\,.
\]
Then $(w_i)_{1\dots n}= a_{i,4}(w_{4})_{1\dots n}
+\dots+a_{i,n}(w_{n})_{1\dots n} +a_{i,n+1}$ for $i=1,2,3$, where
\begin{eqnarray*}
 &&\hskip-19mm (a_{1,4},\dots,a_{1,n},a_{1,n+1})=\Big[\,\frac{\big(\frac{w_1}{w_4}\big)_{134}\big(\frac{w_2}{w_1}\big)_{123}
 +\big(\frac{w_1}{w_4}\big)_{124}\big(\frac{w_3}{w_1}\big)_{123}-1}
 {1+\big(\frac{w_2}{w_1}\big)_{123}+\big(\frac{w_3}{w_1}\big)_{123}} , \\
  && \dots\,, \frac{\big(\frac{w_1}{w_n}\big)_{13n}\big(\frac{w_2}{w_1}\big)_{123}
  +\big(\frac{w_1}{w_n}\big)_{12n}\big(\frac{w_3}{w_1}\big)_{123}-1}
  {1+\big(\frac{w_2}{w_1}\big)_{123}+\big(\frac{w_3}{w_1}\big)_{123}},\, (w_1)_{123}\Big]\\
  &&\hskip-19mm  (a_{2,4},\dots,a_{2,n},a_{2,n+1})
  =\Big[\,a_{1,4}\Big(\frac{w_2}{w_1}\Big)_{123}\!
  -\Big(\frac{w_1}{w_4}\Big)_{134}\Big(\frac{w_2}{w_1}\Big)_{123}, \\
  &&
  \dots,a_{1,n}
 \Big(\frac{w_2}{w_1}\Big)_{123}-\Big(\frac{w_1}{w_n}\Big)_{13n}\Big(\frac{w_2}{w_1}\Big)_{123},\, (w_2)_{123}\Big]\\
 &&\hskip-19mm (a_{3,4},\dots,a_{3,n},a_{1,n+1})=\Big[\,a_{1,4}\Big(\frac{w_3}{w_1}\Big)_{123}-\Big(\frac{w_1}{w_4}\Big)_{124}
 \Big(\frac{w_3}{w_1}\Big)_{123},\\
  &&\dots\,, a_{1,n}\Big(\frac{w_3}{w_1}\Big)_{123}-\Big(\frac{w_1}{w_n}\Big)_{12n}\Big(\frac{w_3}{w_1}\Big)_{123},\,(w_3)_{123}\,\Big].
\end{eqnarray*}
\end{corollary}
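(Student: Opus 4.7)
The plan is to mirror the argument of Theorem~\ref{gplasticityprincipleR2} for $n=4$, applied now at the level of the general $n$-point plasticity equations of Proposition~\ref{theorP5}. First I would take (\ref{plastic1P5}), which expresses the two ratios $(w_2/w_1)_{1\dots n}$ and $(w_3/w_1)_{1\dots n}$ as affine combinations of the $n-3$ free ratios $(w_j/w_1)_{1\dots n}$, $j=4,\ldots,n$, with coefficients built from the three-vertex sub-problem data $(w_1/w_j)_{13j}$, $(w_1/w_j)_{12j}$, $(w_2/w_1)_{123}$, and $(w_3/w_1)_{123}$. These two equations are the only dynamic plasticity input needed.

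Next I would combine this with the normalization $\sum_{i}(w_i)_{1\dots n}=1$ inherited from Problem~\ref{Problem-2} and with the hypothesis $\sum_{1\dots n}w=\sum_{123}w=\ldots=\sum_{1(n-1)n}w$. The equality $\sum_{1\dots n}w=\sum_{123}w$ forces $(w_1)_{1\dots n}=1/\sum_{1\dots n}w=1/\sum_{123}w=(w_1)_{123}$, which immediately pins down the constant terms $a_{i,n+1}=(w_i)_{123}$ for $i=1,2,3$. Multiplying each of the two identities (\ref{plastic1P5}) through by $(w_1)_{123}$ converts ratios into absolute weights and produces
\[
 (w_2)_{1\dots n}=(w_2)_{123}-\Big(\frac{w_2}{w_1}\Big)_{123}\sum\nolimits_{j=4}^{n}\Big(\frac{w_1}{w_j}\Big)_{13j}(w_j)_{1\dots n},
\]
together with the analogous identity for $(w_3)_{1\dots n}$. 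These already deliver the $(w_j)_{1\dots n}$-coefficients in the desired form, up to a shift proportional to $a_{1,j}(w_2/w_1)_{123}$ and $a_{1,j}(w_3/w_1)_{123}$ respectively, matching exactly the $n=4$ pattern from Theorem~\ref{gplasticityprincipleR2}.

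To pin down the symmetric expression stated for $a_{1,j}$ I would add the two scaled plasticity identities and bring in the remaining equalities $\sum_{1jk}w=\sum_{123}w$ (reinterpreted through Proposition~\ref{propo5}, which expresses each sub-problem weight as a normalized sine of the corresponding angle at $P$) to obtain a single linear equation among the $(w_j)_{1\dots n}$. Solving for the $(w_1)_{1\dots n}$-component of this equation returns precisely
\[
 a_{1,j}=\frac{\big(\frac{w_1}{w_j}\big)_{13j}\big(\frac{w_2}{w_1}\big)_{123}+\big(\frac{w_1}{w_j}\big)_{12j}\big(\frac{w_3}{w_1}\big)_{123}-1}{1+\big(\frac{w_2}{w_1}\big)_{123}+\big(\frac{w_3}{w_1}\big)_{123}},
\]
and back-substituting into the identities of the previous paragraph reproduces the stated $a_{2,j}=a_{1,j}(w_2/w_1)_{123}-(w_1/w_j)_{13j}(w_2/w_1)_{123}$ and $a_{3,j}=a_{1,j}(w_3/w_1)_{123}-(w_1/w_j)_{12j}(w_3/w_1)_{123}$. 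The principal obstacle I anticipate is purely notational bookkeeping: one has to keep three families of weights straight throughout — the target $(w_i)_{1\dots n}$, the anchor $(w_i)_{123}$, and the auxiliary three-vertex weights $(w_m)_{jkl}$ — and to iterate the $n=4$ substitution uniformly over $j=4,\ldots,n$. No new geometric ingredient beyond Propositions~\ref{propo5} and~\ref{theorP5} should be required.
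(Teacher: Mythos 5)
Your overall plan --- feed the two ratio identities (\ref{plastic1P5}) of Proposition~\ref{theorP5} into the sum condition and solve the resulting linear system for $(w_1)_{1\dots n},(w_2)_{1\dots n},(w_3)_{1\dots n}$ in terms of the remaining $n-3$ weights --- is exactly the route the paper takes. However, there is a concrete error at the pivot of your argument: the claim that $\sum_{1\dots n}w=\sum_{123}w$ forces $(w_1)_{1\dots n}=(w_1)_{123}$. This cannot be right: it contradicts the identity you are proving, $(w_1)_{1\dots n}=\sum_{j=4}^{n}a_{1,j}(w_j)_{1\dots n}+(w_1)_{123}$, whose coefficients $a_{1,j}$ are nonzero in general, and it also contradicts your own third paragraph, where $(w_1)_{1\dots n}$ comes out as a genuinely non-constant affine function of $(w_4)_{1\dots n},\dots,(w_n)_{1\dots n}$. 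The slip comes from reading $\sum_{1\dots n}w$ as $1/(w_1)_{1\dots n}$; in the paper's proof this quantity is explicitly $(w_1)_{1\dots n}\big(1+\sum_{j\ge 2}(w_j/w_1)_{1\dots n}\big)=\sum_{j}(w_j)_{1\dots n}$, i.e. the total weight, and the hypothesis only asserts that this total equals $\sum_{i}(w_i)_{123}$. Its role is to identify the constant terms $a_{i,n+1}$ with $(w_i)_{123}$, not to pin down $(w_1)_{1\dots n}$ itself.

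The computation that your third paragraph gestures at must therefore replace, not supplement, your second paragraph. Writing $W_i=(w_i)_{1\dots n}$, $\rho_2=(w_2/w_1)_{123}$, $\rho_3=(w_3/w_1)_{123}$, $\sigma_j=(w_1/w_j)_{13j}$ and $\tau_j=(w_1/w_j)_{12j}$, the identities (\ref{plastic1P5}) read $W_2=\rho_2\big(W_1-\sum_{j}\sigma_jW_j\big)$ and $W_3=\rho_3\big(W_1-\sum_{j}\tau_jW_j\big)$; substituting these into $W_1+W_2+W_3+\sum_{j}W_j=\sum_i(w_i)_{123}=(w_1)_{123}(1+\rho_2+\rho_3)$ and solving for $W_1$ gives $W_1=(w_1)_{123}+\sum_j a_{1,j}W_j$ with exactly the stated $a_{1,j}=(\rho_2\sigma_j+\rho_3\tau_j-1)/(1+\rho_2+\rho_3)$, and back-substitution into the two displayed identities yields $a_{2,j}=a_{1,j}\rho_2-\sigma_j\rho_2$ and $a_{3,j}=a_{1,j}\rho_3-\tau_j\rho_3$. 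Note also that the extra equalities $\sum_{1jk}w=\sum_{123}w$ are not what produces the linear equation --- your appeal to Proposition~\ref{propo5} at that point is a red herring; they serve only to normalize the auxiliary triangle weights consistently with the $n$-point problem.
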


\textbf{Proof}. This is a direct consequence of
(\ref{plastic1P5}): by taking into account that
$$
 \sum\nolimits_{1\dots n} w:=(w_1)_{1\dots n} \Big(1+\sum\nolimits_{j=2}^n\frac{w_j}{w_1}\Big)_{1\dots n}=\sum\nolimits_{ijk}{w}
$$
for $i,j,k=1,\dots,n$ and by solving with respect to
$(w_i)_{1\dots n}$ we derive the plasticity equations for
$i=1,2,3$ which depend on $n-3$ variables $(w_{i})_{1\dots n}$ for
$i=4,\dots,n$. \qed

\begin{remark}\rm
If $n-4$ weights are zero, the plasticity principle holds by
Theorem~\ref{T-01}. Thus, Corollary~\ref{corolllngon} for $n=4$ is
a direct consequence of Theorem~\ref{T-01}.

\end{remark}

\begin{remark}\rm
The condition
\[
 \sum\nolimits_{1\dots n}^{}w=\sum\nolimits_{123}^{}w=\sum\nolimits_{124}^{}w=\sum\nolimits_{134}^{}w
 =\ldots=\sum\nolimits_{1(n-1)n}^{}w
\]
can be interpreted as equality between isoperimetric conditions of
the weights $(w_{i})_{ijk}$, $(w_{j})_{ijk},$ $(w_{k})_{ijk}$ with
respect to the triangles $\triangle A_{i}A_{j}A_{k}.$ These
weights corresponds to the side lengths of the dual restricted
weighted F-T problem for these specific triangles by letting the
rest $n-3$ weights zero with the isoperimetric condition of
initial generalized F-T problem for $n$ given weights $w_{1\dots
n}$.
 The dual F-T problem is connected with the F-T
problem by exchanging the lengths of the F-T problem with weights
and the new weights become edge lengths of the dual F-T problem
for triangles.
\end{remark}

The next proposition provides a control to Problem~\ref{Problem-2}
and extends the result \cite[Proposition~19.6]{BolMa/So:99}.

\begin{proposition}[Absorbed case of Problem~\ref{probr2}]
\label{controlincreasebranches} Let $W_{P}\subset\mathbb{R}^{2}$
consists of even number of circles with $m=2k\ (k\ge 2)$, and
their centers $A_{1},A_{2},\dots,A_{2k-1}$ be distributed around
$A_{m}$ in such a way that the interior of
$\angle\,A_{i_{1}}A_{m}A_{i_{2}}$ contains at least one of the
vectors $-w_{j}\vec{u}(A_{m},A_{j})$, $j=1,\dots,m-1$. Then
$P=P_{m}$.
\end{proposition}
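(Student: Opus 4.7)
My plan is to characterize $P = A_m$ via the classical first-order optimality condition for the weighted Fermat-Torricelli point and then verify that condition from the angular hypothesis. Since $f(P) = \sum_{i=1}^{m} w_i\, d(P, A_i)$ is strictly convex (Proposition~\ref{L-01} specialized to $\gamma_i = \{A_i\}$), it admits a unique minimum point, which equals $A_m$ if and only if $0 \in \partial f(A_m)$. All summands with $j \neq m$ are smooth at $A_m$ with gradient $-w_j \vec u(A_m, A_j)$, while the singular summand $w_m\, d(\cdot, A_m)$ contributes $w_m\, \overline B(0,1)$ to the subdifferential. Therefore the optimality condition collapses to the norm inequality
\[
 \Big\| \sum_{j=1}^{m-1} w_j\, \vec u(A_m, A_j) \Big\| \le w_m,
\]
and the proof reduces to deducing this inequality from the hypothesis.

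Writing $\vec v_j := w_j\, \vec u(A_m, A_j)$ and $\vec S := \sum_{j=1}^{m-1} \vec v_j$, it suffices to check $\vec S \cdot \vec e \le w_m$ for every unit vector $\vec e$. Order the $m-1$ directions $\vec u(A_m, A_j)$ cyclically around $A_m$; the hypothesis says that every open sector $\angle A_{i_1} A_m A_{i_2}$ between two consecutive such directions contains at least one antipodal ray $-\vec u(A_m, A_l)$. Since $m - 1 = 2k - 1$ and there are exactly $2k - 1$ such sectors together with $2k - 1$ antipodal rays, the hypothesis forces a strict angular alternation of the rays $\vec u(A_m, A_j)$ and their negatives around the unit circle. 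For any unit vector $\vec e$, locate the sector containing $\vec e$ and invoke its antipode $-\vec u(A_m, A_l)$, whose presence gives $\vec v_l \cdot \vec e < 0$; this strongly negative term should offset the positive contributions from the remaining $\vec v_j \cdot \vec e$ and yield the required bound.

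The main obstacle is turning the qualitative alternation into the quantitative estimate $\vec S \cdot \vec e \le w_m$. I would proceed by a direct pairing argument: at the critical direction $\vec e = \vec S / \|\vec S\|$, partition the indices into $J^\pm = \{ j : \vec v_j \cdot \vec e \gtrless 0 \}$, pair each $j \in J^+$ with an index in $J^-$ supplied by the antipode in an adjacent sector, and bound the paired contribution in terms of the sector aperture. This mimics the strategy of \cite[Proposition~19.6]{BolMa/So:99}, with $A_1, \ldots, A_{m-1}$ playing the role of peripheral fixed points and $A_m$ the absorbing center; the evenness hypothesis $m = 2k$ is crucial because the resulting odd cardinality $2k - 1$ of peripheral centers is what makes the alternation bijective and hence the pairing unambiguous.
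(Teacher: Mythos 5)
Your reduction to the criterion $\bigl\|\sum_{j=1}^{m-1} w_j\,\vec u(A_m,A_j)\bigr\|\le w_m$ via the first-order optimality (absorbed-case) condition, followed by adapting the pairing argument of \cite[Proposition~19.6]{BolMa/So:99} to the rescaled vectors $w_j\,\vec u(A_m,A_j)$, is essentially the paper's own proof: the paper likewise normalizes the weights, invokes the weighted absorbed-case criterion from \cite[p.~250]{BolMa/So:99}, sets $\vec u'(A_m,A_i)=w_i\,\vec u(A_m,A_i)$, and asserts that the argument of Proposition~19.6 then goes through line by line. Your sketch of the final quantitative pairing step is no less complete than the paper's, which simply defers that step to the reference.
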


\textbf{Proof}.
We can set $\sum_{i=1}^{2k-1}w_{i}=1$, where $w_{i}$ is the weight
corresponding to the vertex $A_{i}\ (i=1,2,\dots,2k-1)$ and
$w_{2k}=1$ corres\-ponds to
$A_{2k}$.
 By the weighted absorbed case \cite[p.~250]{BolMa/So:99}, we need to get
 $\|\sum\nolimits_{i=1}^{2k-1}w_{i}\vec{u}(A_{m},A_{i})\|\le 1$,
where $\vec{u}(A_{m},A_{i})$ is a unit vector in the direction
of~$A_{m}A_{i}$.
Set $w_{i}\vec{u}(A_{m},A_{i})=\vec{u^{\prime}}(A_{m},A_{i})$.
Then the proof goes line by line in the same way as given in
\cite[Proposition~19.6]{BolMa/So:99}.
\qed

\section{Some new types of evolution of five non-overlapping circles in $\mathbb{R}^{2}$}
\label{sec:non-over}

We obtain two new types of evolutionary structures of five
non-overlapping circles with variable radius such that their
(fixed) centers form a convex pentagon by deriving the iteration
plasticity equations and their radius may be taken to be a scaling
of weights $w_{i}$. The evolution of five such circles is a
generalization of the evolution of weighted pentagons in
$\mathbb{R}^{2}$ which has been given in
\cite[Theorem~7]{Zachos:2013a}.

We may select a scaling constant with respect to
weights of
evolutio\-nary circles with variable radius, corresponding to
Problem~\ref{Problem-2} for $n=3$ not overlapping circles.

\begin{theorem}\label{typesplasticity}
The evolution of five non-overlapping circles, whose centers form
a convex pentagon, might be of the following two types:

Type A: For Problem~\ref{Problem-2} with $n=3$,
two branches grow simultaneously from $P$ inside
$\angle\,A_{1}PA_{3}$.

Type B: For Problem~\ref{Problem-2} with $n=3$,
one branch grows from $P$ inside $\angle\,A_{1}PA_{3}$, the other
branch grows inside $\angle\,A_{1}PA_{2}$, but both branches do
not grow simultaneously.
\end{theorem}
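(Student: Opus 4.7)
The plan is to realise both evolutionary types as iterations of the plasticity results already derived for smaller $n$. Starting from Problem~\ref{Problem-2} with $n=3$, Proposition~\ref{propo5} expresses $(w_1)_{123},(w_2)_{123},(w_3)_{123}$ in terms of the three angles at $P$, and (\ref{E-wi}) pins those angles down from the weights. Passing to $n=5$ amounts to introducing two additional branches $PA_4'$ and $PA_5'$ while keeping $P$ fixed as the generalized F-T point, so by the first variation formula (\ref{compute partialgen}) we must have $\sum_{i=1}^{5} w_{i}\,\vec u(P,A_i')=\vec 0$ throughout the evolution.

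For Type~A I would invoke Proposition~\ref{theorP5} and Corollary~\ref{corolllngon} directly with $n=5$: both $PA_4'$ and $PA_5'$ are placed inside $\angle A_{1}PA_{3}$ in the nested configuration $\vec{PA_5}\subset\angle A_{1}PA_{4}\subset\angle A_{1}PA_{3}$, and the two branches are varied together by the single angular parameter used in the proof of Proposition~\ref{theorP5}. The resulting system (\ref{plastic1P5}) for $n=5$, combined with the isoperimetric normalisation of Corollary~\ref{corolllngon}, expresses each $(w_i)_{1\dots 5}$ affinely in the two free weights $(w_4)_{1\dots 5}$ and $(w_5)_{1\dots 5}$, which can then be rescaled into radii of the evolving circles exactly as in \cite[Theorem~7]{Zachos:2013a}.

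For Type~B the evolution is iterative rather than simultaneous. First apply Theorem~\ref{T-01} to the four-branch system obtained by placing $PA_4'$ inside $\angle A_{1}PA_{3}$: this gives the plasticity equations (\ref{plastic1g})--(\ref{plastic3q}) for $(w_i)_{1234}$. Next, treat $\{C_1,C_2,C_4\}$ together with a new circle $C_5$ inside $\angle A_{1}PA_{2}$ as a fresh four-circle instance of Problem~\ref{Problem-2} and reapply Theorem~\ref{T-01}, obtaining the analogous plasticity equations for $(w_i)_{1245}$. Composing the two layers produces the five weights of the evolved system, and the non-simultaneity is encoded in the fact that the two free parameters enter through different four-gon plasticity subsystems, so the resulting affine dependence differs structurally from the Type~A one.

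The main obstacle I expect is verifying that each scheme returns a \emph{globally} consistent generalized F-T configuration. Concretely, one must check that the angular data produced by (\ref{plastic1P5}) in Type~A, or by the two iterated instances of Theorem~\ref{T-01} in Type~B, actually satisfy the full weighted vector equilibrium at $P$ and, in addition, the non-overlap and location constraints on the circles (so that $P$ keeps lying in the correct intersection of interior triangles formed by the projections, as required for Theorem~\ref{T-01} to apply at each stage). These reductions are essentially linear-algebraic manipulations of the weighted `sine' equations together with Proposition~\ref{propo5} applied to each triangle $\triangle A_i'A_j'A_k'$, but this is where the argument carries real content, beyond the bookkeeping of the plasticity identities.
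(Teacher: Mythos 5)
Your Type~A argument coincides with the paper's: the paper also characterizes Type~A simply by invoking Corollary~\ref{corolllngon} for $n=5$ with the two new branches nested inside $\angle\,A_{1}PA_{3}$, and reads off from the resulting affine dependence which weights increase and which decrease. For that half your proposal is essentially the paper's proof.

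For Type~B there is a genuine gap in your reduction. You propose to run Theorem~\ref{T-01} first on $\{C_1,C_2,C_3,C_4\}$ and then on $\{C_1,C_2,C_4,C_5\}$, i.e.\ to simply drop $C_3$ when introducing the branch inside $\angle\,A_{1}PA_{2}$. But $P$ is the generalized F-T point of the \emph{full} system, so the equilibrium $\sum_i w_i\vec{u}(P,A_i')=\vec 0$ that Theorem~\ref{T-01} presupposes fails for the four-element subfamily $\{C_1,C_2,C_4,C_5\}$ with the inherited weights: the pull of $C_3$ does not disappear. The paper avoids this by composing the two vectors $w_3\vec{u}(P,A_3)$ and $w_4\vec{u}(P,A_4)$ into a single resultant $w_{3=4}\vec{u}(P,A_{3=4})$, which replaces the two circles by one effective weighted circle and \emph{preserves} the equilibrium at $P$; the second application of the four-circle plasticity is then made to the quadrilateral $A_1A_5A_2A_{3=4}$, not to $A_1A_2A_4A_5$. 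Without this vector-composition step your second invocation of Theorem~\ref{T-01} is not licensed, and the two ``layers'' you compose do not describe the same point $P$. Your closing paragraph correctly identifies that global consistency of the equilibrium is the real issue, but the proposal does not supply the device (reduction by resultants) that resolves it.
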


\textbf{Proof}. The characterization of type A is given by
Corollary~\ref{corolllngon} for $n=5$. These iteration plasticity
equations give an increase of weights $(w_{4})_{1\ldots5}$,
$(w_{5})_{1\ldots5}$ and $(w_{2})_{1\ldots5}$, and a decrease of
weights $(w_{1})_{1\ldots5}$ and $(w_{3})_{1\ldots5}$.

One may characterize type B by extracting Problem~\ref{Problem-2}
for $n=3$ from Problem~\ref{Problem-2} for four evolutionary
non-overlapping circles and derive the plasticity of new problem.
Thus, from the plasticity principle of four
non-overlapping circles the weights (radii) $(w_{4})_{1\ldots4}$
and $(w_{2})_{1\ldots4}$ increase, and the weights (radii)
$(w_{1})_{1\ldots4}$ and $(w_{3})_{1\ldots4}$ decrease. Therefore,
the fourth and second circles are decreased and the first and
third circle are increased, keeping their centers at vertices of a
fixed convex quadrilateral, because their radii are taken to be
scalings of weights $w_{i}\ (i=1,\ldots,4)$.

Composing the vectors $w_{4}\vec{u}(P,A_{4})$ and
$w_{3}\vec{u}(P,A_{3})$ we get $w_{3=4}\vec{u}(P,A_{3=4})$.
The\-refore, modified Problem~\ref{Problem-2} for three new
evolutionary non-overlapping circles depends on ratios of weights
$w_{1}$ $w_{2}$ and $w_{3=4}$. Taking into account the plasticity
principle of four non-overlapping circles whose centers form a
convex quadrilateral $A_{1}A_{5}A_{2}A_{3=4}$, we find that the
weights $(w_{5})_{152(3=4)}$ and $(w_{34})_{152(3=4)}$ increase,
while $(w_{1})_{152(3=4)}$ and $(w_{2})_{152(3=4)}$ decrease.
 Indices in parentheses mean possible reduction of pentagon to
quadrilateral (composing vectors of third and fourth rays)
starting from $\triangle A_{1}A_{2}A_{5}$. So, the weight
$w_{125(3=4)}$ corresponds to the reduced weight of the
evolutionary quadrilateral $A_{1}A_{2}A_{5}A_{3=4}$. \qed

\begin{remark}\rm
Following Theorem~\ref{typesplasticity}, we can derive various
types of evolution of $n$ non-overlapping circles with respect to
growing of branches from $P$. The termination of evolution may be
studied when the closed convex sets are
circles, whose radii may be taken to be a scaling of weights
$w_{i}$ from Corollary~\ref{corolllngon}, such that at every
circle might appear a leaf of tree-like type (see~\cite{XiaQ:07}
for formation~of~leaves and ~\cite{Zachos:2012,Zachos:2015} for
creation of tree leaf types).

Note that the algorithm and equations used for modeling the
formation of leaves in \cite{XiaQ:07} did not take into account
the main branch of the leaf and used a selection principle to
control solar energy. We may overpass the obstacle of creating a
monster leaf by controlling the solar energy using conditions for
the weights referring as isoperimetric conditions of the weights.
Concerning our model $n$ leaves may be derived by the generalized
F-T problem for $n$ circles (or closed convex sets) and $2k$
branches which could be moved via parallel translation inside
these circles. We call a leaf a circle (or closed convex set)
enriched with the structure of a finite number of parallel
translated $V$ shaped branches inside the circle along its main
branch.
\end{remark}

\section*{Acknowledgment}
The author would like to thank Prof. Dr. V.~Rovenski for his
comments and for many useful discussions and Prof. Q.~Xia for
communicating \cite{XiaQ:07}.

\end{document}